\newtheorem{thm}{Theorem}[section]
\newtheorem{lem}{Lemma}
\newtheorem{rem}{Remark}
\newcommand{\Z}{\mathbb{Z}}
\begin{document}
\title
{$3k-4$ theorem for ordered groups}
\author{Prem Prakash Pandey	}

\address[Prem Prakash Pandey]{School of Mathematical Sciences,
                                           NISER Bhubaneswar (HBNI),
                                            Jatani, Khurda-650 052, India.}
                                            
\email{premshivaganga@gmail.com}

\subjclass[2010]{11P70}

\date{\today}
\keywords{$3k-4$ Theorem, ordered groups}

\maketitle

\begin{abstract}
Recently, G. A. Freiman, M. Herzog, P. Longobardi, M. Maj proved two `structure theorems' for ordered groups \cite{FHLM}. We give elementary proof of these two theorems.
\end{abstract}

\section{Introduction}
For any group $G$ (written multiplicatively) and a subset $S$ of $G$ we define $S^2=\{ab:a,b \in S \}$. Then, the main theorem of \cite{FHLM} is
\begin{thm}\label{Prem}[Theorem 1.3, \cite{FHLM}]
Let $G$ be an ordered group and $S$ be a finite subset of $G$. If $|S^2| \leq 3|S|-3$ then the subgroup generated by $S$ is an abelian subgroup of $G$.
\end{thm}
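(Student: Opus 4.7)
The plan is to prove the contrapositive: if $\langle S \rangle$ is non-abelian then $|S^2| \geq 3n - 2$, where $n = |S|$. The only properties of ordered groups I will use are that multiplication is monotone on both sides and that conjugation is an order-preserving bijection. Order $S$ as $s_1 < s_2 < \cdots < s_n$. Two-sided monotonicity immediately yields two totally ordered chains of length $2n - 1$ inside $S^2$: the \emph{forward chain}
\[
s_1^2 < s_1 s_2 < \cdots < s_1 s_n < s_2 s_n < \cdots < s_n^2
\]
arising from $s_1 S \cup S s_n$, and the \emph{reverse chain}
\[
s_1^2 < s_2 s_1 < \cdots < s_n s_1 < s_n s_2 < \cdots < s_n^2
\]
arising from $S s_1 \cup s_n S$.

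I propose to proceed by induction on $n$. The base case $n = 2$ is immediate: the strict inequalities $s_1^2 < s_1 s_2, s_2 s_1 < s_2^2$ force $|S^2| \leq 3$ iff $s_1 s_2 = s_2 s_1$, which is precisely the abelianness of $\langle S \rangle$. For the inductive step with $n \geq 3$, set $S_0 = S \setminus \{s_n\}$ and $S_1 = S \setminus \{s_1\}$, each of size $n - 1$ with square contained in $S^2$. I would split into cases according to whether $|S_0^2|$ and $|S_1^2|$ satisfy the inductive bound $\leq 3(n-1) - 3 = 3n - 6$. In the ``good'' case where both do, induction forces $\langle S_0 \rangle$ and $\langle S_1 \rangle$ to be abelian, so every pair $(s_i,s_j)$ with $\{i,j\} \neq \{1,n\}$ already commutes and the task reduces to showing $s_1 s_n = s_n s_1$. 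In the complementary case, say $|S_0^2| \geq 3n - 5$, the count $|S^2 \setminus S_0^2| \leq 2$ combined with the three elements $s_n^2$, $s_n s_{n-1}$, and $s_{n-1} s_n$ all strictly exceeding $\max S_0^2 = s_{n-1}^2$ forces $s_n s_{n-1} = s_{n-1} s_n$ (and places strong constraints on every other product of the form $s_i s_n$ or $s_n s_j$), with a symmetric statement holding for $S_1$.

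The closing step is to show that in the resulting ``almost abelian'' configuration, where only the pair $(s_1,s_n)$ can fail to commute, any non-commutation $s_1 s_n \neq s_n s_1$ is incompatible with $|S^2| \leq 3n - 3$. Combining the forward chain (contributing $2n - 1$ elements of $S^2$), the one additional element $s_n s_1$ (if distinct from $s_1 s_n$, which slots strictly between $s_1 s_n$ and $s_2 s_n$ in the total order), and the $n - 2$ squares $s_2^2, \ldots, s_{n-1}^2$, a careful comparison of their positions in the total order should yield $|S^2| \geq 3n - 2$. The main obstacle is exactly this closing count: showing that the squares $s_i^2$ cannot all coincide with chain elements. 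Such collisions are frequent in abelian arithmetic-progression configurations and must be ruled out here by leveraging the assumed non-commutation $s_1 s_n \neq s_n s_1$ together with the order rigidity of the ambient ordered group.
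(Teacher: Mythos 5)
Your skeleton (induction on $n$, peeling off extremal elements, counting along the two monotone chains) is in the same spirit as the paper, and your ``complementary case'' is essentially the paper's Case 2: there the three elements $s_n^2$, $s_ns_{n-1}$, $s_{n-1}s_n$ exceeding $\max S_0^2$ do force $s_{n-1}s_n=s_ns_{n-1}$, and the continuation (forcing $s_{n-2}s_n=s_ns_{n-2}=s_{n-1}^2$, etc.) shows all of $S$ is a geometric progression $\{yx^{t_i}\}$, hence generates an abelian group. That half can be completed as you indicate. The genuine gap is the ``good case'', and you have named it yourself: the closing count. The specific count you propose --- forward chain ($2n-1$ elements) plus $s_ns_1$ plus the $n-2$ squares $s_2^2,\ldots,s_{n-1}^2$ --- cannot work, because those squares \emph{do} coincide with forward-chain elements whenever the commuting part of $S$ is close to a geometric progression (e.g.\ $s_i^2=s_1s_{2i-1}$), and the non-commutation of the single pair $(s_1,s_n)$ does nothing to prevent this. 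Also, the assertion that $s_ns_1$ ``slots strictly between $s_1s_n$ and $s_2s_n$'' is unjustified: from $s_{n-1}s_1=s_1s_{n-1}$ and $s_ns_2=s_2s_n$ you only get $s_1s_{n-1}<s_ns_1<s_2s_n$, so $s_ns_1$ could fall on either side of $s_1s_n$. So as written the argument does not reach $3n-2$, and you offer no mechanism to rule out the collisions.

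The paper avoids this dead end by not reducing to a single non-commuting pair at all. In its Case 1 it asks whether $x_kT\cap T^2$ or $Tx_k\cap T^2$ is nonempty (if so, $x_k\in\langle T\rangle$ and you are done immediately since $\langle T\rangle$ is abelian); if both are empty, then $T^2$, $x_kT$ and $\{x_k^2\}$ are disjoint, giving $|S^2|\geq|T^2|+|T|+1$. A separate lemma (if $T$ is not a geometric progression then $|T^2|\geq 2|T|$) then forces $T$ to be a geometric progression, and one more element-count forces the \emph{set} equality $x_kT=Tx_k$. The decisive trick, absent from your outline, is that a set equality $x_kT=Tx_k$ for a finite chain $T$ in an ordered group forces $x_kt=tx_k$ elementwise, because $t\mapsto x_ktx_k^{-1}$ is an order-preserving permutation of $T$ and hence the identity. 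I recommend you replace your closing count with this dichotomy: either $x_k$ already lies in $\langle T\rangle$, or the disjointness gives you enough room to force $T$ into a geometric progression and then apply the order-rigidity argument.
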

As a corollary to Theorem \ref{Prem}, they deduce a $3k-4$ type theorem for ordered groups.
\begin{thm}\label{Prem1}[Corollary 1.4, \cite{FHLM}]
Let $G$ be an ordered group and $S$ be a finite subset of $G$ with $|S|=k \geq 3$. If $|S^2| \leq 3|S|-4$, then there exist two commuting elements $x,y$ such that $S \subset \{yx^i: 0 \leq i \leq N \} $ for $N=|S^2|-|S|$.
\end{thm}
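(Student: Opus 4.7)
The plan is to reduce to Freiman's classical $3k-4$ theorem in $\mathbb{Z}$, using Theorem~\ref{Prem} to enter the abelian world. Since $|S^2| \le 3|S|-4 < 3|S|-3$, Theorem~\ref{Prem} gives that $H := \langle S\rangle$ is abelian; being a subgroup of an ordered group, $H$ is torsion-free, and being finitely generated (by $S$) it is isomorphic as an abstract group to $\mathbb{Z}^n$ for some $n$. I switch to additive notation on $H$ for the remainder of the argument.

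I would then pick a group homomorphism $\pi : H \cong \mathbb{Z}^n \to \mathbb{Z}$ injective on the finite set $S+S$, for instance $\pi(x_1,\dots,x_n) = \sum_{i=1}^n M^{i-1} x_i$ for $M$ sufficiently large, so that $A := \pi(S) \subset \mathbb{Z}$ satisfies $|A| = |S|$ and $|A+A| = |S+S| \le 3|A|-4$, and $\pi|_S$ is a Freiman $2$-isomorphism from $S$ onto $A$. Freiman's classical theorem over $\mathbb{Z}$ then produces $c,d \in \mathbb{Z}$ with $A \subset \{c+id : 0 \le i \le N\}$ where $N = |A+A|-|A| = |S^2|-|S|$.

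The main step, and the principal obstacle, is to lift this arithmetic progression back from $A \subset \mathbb{Z}$ to $S \subset H$. Writing $\pi(s) = c + i_s d$ for each $s \in S$, I would pick $y \in S$ with $i_y = 0$ and $z \in S$ realising the smallest positive value of $i_s$, then set $x := z - y \in H$; each additive equality $i_a + i_b = i_c + i_d$ among the indices passes through $\pi$ to the identity $s_a + s_b = s_c + s_d$ in $H$, and telescoping these relations together with the torsion-freeness of $H$ should force $s = y + i_s x$ for every $s \in S$. Reverting to multiplicative notation, $x$ and $y$ commute in $G$ because $H$ is abelian, and $S \subset \{y x^i : 0 \le i \le N\}$, as asserted. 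The delicate point is confirming that a single element $x$ serves as the common difference for every $s \in S$, not merely for the pair $(y,z)$ from which it was built; this rests on the $3k-4$ hypothesis supplying enough additive quadruples in $A$ to chain between all its elements via $y$ and $z$, and can be made fully transparent by strengthening $\pi$ to a Freiman $3$-isomorphism (by demanding injectivity on $3S$) if extra additive relations are needed.
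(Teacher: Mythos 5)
Your overall strategy --- use Theorem \ref{Prem} to pass to a finitely generated torsion-free abelian group $H\cong\Z^n$, project to $\Z$ by a Freiman $2$-isomorphism $\pi$, apply the integer $3k-4$ theorem to $A=\pi(S)$, and lift the progression back to $S$ --- is not circular in this paper (Theorem \ref{Prem} is proved without using the \emph{statement} of Theorem \ref{Prem1}), but it has a genuine gap exactly where you place ``the principal obstacle'': the lifting step is asserted, not proved, and the mechanism you sketch does not prove it. Knowing that $A$ sits inside an arithmetic progression of $\Z$ gives you nothing about $S$ beyond the additive quadruples $a+b=c+d$ that hold inside $A$, and a set can be $2$-isomorphic to a subset of an arithmetic progression without being contained in any line: $S=\{(0,0),(1,0),(0,1),(1,1)\}\subset\Z^2$ projects under $(u,v)\mapsto u+3v$ (injective on $S+S$) onto $\{0,1,3,4\}\subset\{0,\dots,4\}$, yet its only nontrivial quadruple is $(0,0)+(1,1)=(1,0)+(0,1)$, which forces no collinearity. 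This example violates $|S+S|\le 3|S|-4$, but that is precisely the point: what you must show is that the doubling bound supplies enough quadruples to pin $S$ to a line, and that assertion is exactly the planar case of Freiman's dimension inequality ($|S+S|\ge 3|S|-3$ whenever $S$ has affine dimension at least $2$), a nontrivial theorem you neither prove nor cite. The recipe ``set $x=z-y$ with $z$ of smallest positive index and telescope'' is not an argument, and is not even correctly normalized: if that smallest positive index is $e>1$ then $\pi(z-y)=ed$, so $s=y+i_sx$ is incompatible with $\pi(s)=c+i_sd$. The closing remark that one could upgrade $\pi$ to a $3$-isomorphism ``if extra additive relations are needed'' concedes that you do not know which relations carry the proof.

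For comparison, the paper never faces a lifting problem: it proves by an elementary induction on $k$, using only the order on $G$, that $S\subseteq\{yx^j:0\le j\le M'\}$ for some commuting $x,y$ and some $M'$, and only then maps $yx^j\mapsto j$ into $\Z$ --- a bijective $2$-isomorphism onto its image by construction --- so that Freiman's theorem immediately yields $M'\le N$. If you wish to keep your top-down route, the telescoping heuristic must be replaced by an actual proof that $|S+S|\le 3|S|-4$ forces $S$ to have affine dimension $1$ in $H\otimes\mathbb{Q}$ (Freiman's dimension inequality, or a direct argument); as written, the crucial implication is missing.
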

We give elementary proofs of Theorem \ref{Prem} and Theorem \ref{Prem1}. We shall always assume that $G$ is an ordered group and $S$ is a finite subset of $G$ with $k$ elements. We shall write $S=\{x_1, \ldots, x_k\}$ and assume that $x_1< \ldots <x_k$.

\section{Proofs}
As in the case of integers, the following inequality holds:
\begin{equation}\label{e1}
|S^2| \geq 2|S|-1.
\end{equation}
In equation (\ref{e1}) equality holds only if $S$ is a geometric progression $\{yx^i: 0 \leq i \leq k \} $, with $x,y$ two commuting elements of $G$.

\begin{lem}\label{L2}
If $S$ is not a geometric progression then $|S^2| \geq 2|S|.$
\end{lem}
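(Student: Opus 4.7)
The plan is to prove the contrapositive: assuming $|S^2| \leq 2k-1$, equation (\ref{e1}) forces $|S^2| = 2k-1$, and I will deduce that $S$ is a geometric progression. (This also settles the equality case of (\ref{e1}) announced above the lemma.) The key tool is to exhibit several strictly increasing chains of length $2k-1$ in $S^2$; under the assumption $|S^2|=2k-1$, each such chain must enumerate $S^2$, and so any two of them agree term-by-term.

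First, using that $G$ is bi-ordered, the chains
\begin{align*}
C_1 &: x_1^2 < x_1 x_2 < \cdots < x_1 x_k < x_2 x_k < \cdots < x_k^2,\\
C_2 &: x_1^2 < x_2 x_1 < \cdots < x_k x_1 < x_k x_2 < \cdots < x_k^2
\end{align*}
are each strictly increasing of length $2k-1$; the ``turn'' inequalities $x_1 x_k < x_2 x_k$ and $x_k x_1 < x_k x_2$ follow from multiplying $x_1<x_2$ on the right by $x_k$ and on the left by $x_k$, respectively. Equating $C_1$ and $C_2$ position by position yields $x_1 x_j = x_j x_1$ for $1 \leq j \leq k$, so $x_1$ commutes with every element of $S$.

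Next, I introduce the ``zigzag'' chain
$$Z : x_1^2 < x_1 x_2 < x_2^2 < x_2 x_3 < x_3^2 < x_3 x_4 < \cdots < x_k^2,$$
whose consecutive pairs are either $(x_i x_i,x_i x_{i+1})$ or $(x_i x_{i+1},x_{i+1} x_{i+1})$, both strictly increasing. Since $Z$ again has $2k-1$ elements, $Z=S^2$, and termwise matching with $C_1$ gives
$$x_1 x_{2i-1} = x_i^{\,2}, \qquad x_1 x_{2i} = x_i x_{i+1}.$$
Writing $a := x_1$ and $b := x_2$, these relations together with $ab=ba$ drive a strong induction on $j$: having expressed every $x_\ell$ with $\ell < j$ as a word in $a,b$, one of the two relations above determines $x_j$. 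The outcome is $x_j = a^{2-j} b^{j-1}$ for $1 \leq j \leq k$; setting $y = a$ and $x = b a^{-1}$ (which commute since $a$ is central), one obtains $S = \{y x^{i-1} : 1 \leq i \leq k\}$, a geometric progression, contradicting the hypothesis.

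The main obstacle is the bookkeeping for the chain comparisons: one must verify strict monotonicity at every ``turning point'' (which is where the \emph{two-sided} order is essential, not just a one-sided order), and match indices in the zigzag $Z$ against the two halves of $C_1$ carefully to extract precisely the relations that allow the induction to close. Once those relations are in hand, the remainder is a routine computation with $a$ central.
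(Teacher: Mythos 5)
Your proof is correct. The governing idea is the same as the paper's: once $|S^2|=2k-1$, the set $S^2$ is exhausted by the canonical increasing chain $x_1^2<x_1x_2<\cdots<x_1x_k<x_2x_k<\cdots<x_k^2$, and any other strictly increasing sequence in $S^2$ must then match it term by term. Where you differ is in the choice of auxiliary chains: the paper uses the single row $x_2x_1<x_2x_2<\cdots<x_2x_k$, squeezed between $x_1^2$ and $x_2x_k$, which forces $x_2x_i=x_1x_{i+1}$ for $1\le i\le k-1$; that one family of relations simultaneously gives $x_1x_2=x_2x_1$ and the recursion $x_{i+1}=x_1^{-1}x_2x_i$, so the geometric progression drops out immediately. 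You instead make two comparisons: $C_1$ against the reversed chain $C_2$ (yielding that $x_1$ commutes with all of $S$, and in fact that $x_k$ does too, though you do not need the latter) and $C_1$ against the zigzag $Z$ (yielding $x_1x_{2i-1}=x_i^2$ and $x_1x_{2i}=x_ix_{i+1}$), after which a strong induction recovers $x_j=x_1^{2-j}x_2^{j-1}$. I checked the index bookkeeping: for each $3\le j\le k$ the $j$-th term of $Z$ involves only indices strictly below $j$, so your induction closes, and the final identification $y=x_1$, $x=x_2x_1^{-1}$ is the same normalization the paper uses. The paper's single-comparison argument is shorter; yours costs one extra chain and a slightly heavier induction, but it isolates the commutativity step ($C_1$ versus $C_2$) cleanly from the structural recursion, which some readers may find clearer.
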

\begin{proof}
Let $S=\{x_1< \ldots <x_k \}$. Certainly 
$$x_1x_1<x_1x_2< \ldots <x_1x_k<x_2x_k< \ldots <x_kx_k$$ are $2|S|-1$ distinct elements in $S^2$.
If $|S^2| < 2|S|$ then
$$\{x_1x_1,x_1x_2, \ldots ,x_1x_k,x_2x_k, \ldots ,x_kx_k\} = S^2.$$
Now, consider the elements $x_2x_1<x_2x_2< \ldots <x_2x_k$. All these elements are in $S^2$ and $x_1x_1<x_2x_1, \ldots, x_2x_{k-1}<x_2x_k$. Thus we must have 
$$x_2x_1=x_1x_2, x_2x_2=x_1x_3, x_2x_3=x_1x_4, \ldots, x_2x_{k-1}=x_1x_k.$$
From the above relations it follows that $x_1$ and $x_2$ commute and for $i>2$, $x_i$ is contained in the subgroup generated by $x_1, \ldots , x_{i-1}$. Consequently we get that each $x_i$ commutes with each $x_j$ for $i,j=1, \ldots, k$.

Put $y=x_1,x=x_2x_1^{-1}$, then $x$ and $y$ commute and $S=\{y,xy,x^2y, \ldots, x^{k-1}y\}$.\\
Thus, if $S$ is not a geometric progression then $|S^2| \geq 2|S|$.
\end{proof}

\begin{proof}[Proof of Theorem \ref{Prem1}]
We shall use induction on $k$. For $k=3$, we have $|S^2| \leq 5$. We have five distinct elements $x_1^2<x_1x_2<x_2^2<x_2x_3<x_3^2$ in $S^2$. Since $x_1x_3 \in S^2$, so $x_1x_3$ must equal to one of these five elements. Using the order relation, we get $x_1x_3=x_2^2 $. Similarly, we get $x_1x_2=x_2x_1$. Let $y=x_1$ and $x=x_2x_1^{-1}$. Then $x$ and $y$ commute and $S=\{y,yx,yx^2\}.$\\ 
Now we assume that $k \geq 4$ and the theorem is true for any subset $T$ of $G$ with $|T|\leq k-1$. Put $T=\{x_1, \ldots, x_{k-1} \}.$\\
Case (1): $|T^2| \leq 3|T|-4$.\\
By induction hypothesis, there are commuting elements $x,y$ such that $T \subset \{yx^j: j=0, 
\ldots, M\}$ with $M=|T^2|-|T|$.\\
In case $x_kT \cap T^2=\emptyset$, then, taking $x_k^2$ in account, we see that $|S^2| \geq |T^2|+(|T|+1).$ Since $|T^2| \geq 2|T|-1$, we immediately obtain $|S^2| \geq 3|S|-3,$ which contradicts the hypothesis. Thus, we get $x_kT \cap T^2 \neq \emptyset$. Consequently, there are $yx^i,yx^u,yx^v \in T$ such that $x_kyx^i=yx^uyx^v $. This gives $x_k=yx^{(u+v-i)}$ and $S \subset \{yx^j: j=0, 
\ldots, M'\}$ with $M'=\max \{M,u+v-i\}$. Clearly the map $yx^j \mapsto j$ gives a $2-isomorphism$ of $S$ with a subset of $\Z$. From the Freiman's $3k-4$-theorem for integers, it follows that $M' \leq N$, and  the theorem is proved.\\
Case (2): $|T^2| \geq 3|T|-3=3|S|-6$. Using the order relation of $G$ we see that the elements $x_k^2$ and $x_kx_{k-1}$ of $S^2$ are not in $T^2$. Consider the element $x_{k-1}x_{k}$ of $S^2$. If $x_{k-1}x_k \neq x_kx_{k-1}$ then we get $|S^2| \geq |T^2|+3$, which contradicts the hypothesis. So, we obtain $x_{k-1}x_k = x_kx_{k-1}$. Next, we consider the element $x_{k-2}x_k$ of $S^2$. If $x_{k-2}x_k\neq x_{k-1}^2$, then we already get $|S^2| \geq |T^2|+3$, leading to a contradiction. Similarly it follows that $x_kx_{k-2}=x_{k-1}^2$. Thus we have $$x_{k-1}x_k = x_kx_{k-1}, x_{k-2}x_k= x_{k}x_{k-2}=x_{k-1}^2.$$
Put $y=x_k, x=x_{k-1}x_k^{-1}$. Then $x$ and $y$ commute and $x_k=y, x_{k-1}=yx,x_{k-2}=yx^2$. Considering the elements $x_{k-3}x_k,x_{k-4}x_k, \ldots, x_1x_k$ successively we see that each of $x_i$ is of the form $yx^{t_i}$. Clearly $S$ is $2-isomorphic$ to the subset $\{ t_i : 1 \leq i \leq k \}$ of $\Z$. Now the theorem follows from the Freiman's $3k-4$-theorem for integers.
\end{proof}

\begin{proof}
[Proof of Theorem \ref{Prem}] We shall use induction on $k$. For $k=1,2$, the theorem holds trivially.
Now, let $k \geq 3$ and assume that the theorem is true for any set $T$ with $|T|\leq k-1$. Put $T=\{x_1, \ldots, x_{k-1} \}.$\\
Case (1): $|T^2| \leq 3|T|-3$.\\
By induction hypothesis, $T$ generates a commutative subgroup. If $x_kT \cap T^2 \neq \emptyset$ or $Tx_k \cap T^2 \neq \emptyset$ then $x_k$ lies in the subgroup generated by $T$. Consequently, $S$ generates a commutative subgroup. So we assume that $x_kT \cap T^2 = \emptyset$ and $Tx_k \cap T^2 = \emptyset$. Using the order relation, we see that $x_k^2 \not \in T^2 \cup x_kT$, so we obtain
\begin{equation}\label{e12}
|S^2| \geq |T^2|+|T|+1.
\end{equation}
If $T$ is not a geometric progression then, using Lemma \ref{L2} in (\ref{e12}), we see that $|S^2| \geq 3|S|-2$, which contradicts the hypothesis. Thus, $T$ must be a geometric progression.\\ 
Next, observe that if $x_kT \neq Tx_k$ then we have an element in $Tx_k$ which is not in $T^2 \cup x_kT \cup \{x_k^2\}$. This leads to 
\begin{equation}\label{e2}
|S^2| \geq |T^2|+|T|+1+1.
\end{equation}
From this one obtains $|S^2| \geq 3|S|-2$, which contradicts the hypothesis. Thus, we must have $x_kT = Tx_k$. Now using the order relation we see that $x_k$ commutes with all the elements of $T$ and consequently $S$ generates an abelian group.\\
Case (2): $|T^2| > 3|T|-3$.\\
As in the proof of Theorem \ref{Prem1} (the arguments used in Case (2)) we see that either $|S^2| \geq |T^2|+3$ or $S=\{yx^{t_i}: 1 \leq i \leq k \}$ with commuting elements $x$ and $y$. The former leads to a contradiction and hence we get $S=\{yx^{t_i}: 1 \leq i \leq k \}$ with commuting elements $x$ and $y$. This proves the theorem.
\end{proof}

\begin{rem}
From the proof of Theorem \ref{Prem1} it is clear that the subgroup generated by $S$ (with $|S|>2$) is, in fact, generated by $|S|-1$ or less elements.
\end{rem}

\end{document}